\newtheorem{theorem}{Theorem}[section]
\newtheorem{corollary}[theorem]{Corollary}
\theoremstyle{definition}
\newtheorem{remark}[theorem]{Remark}
\numberwithin{equation}{section}
\begin{document}

\title[Bohr's inequality revisited]{Bohr's inequality revisited}

\author[M. Fujii, M.S. Moslehian, J. Mi\'{c}i\'{c}]{Masatoshi Fujii$^1$, Mohammad Sal Moslehian$^2$, and Jadranka Mi\'{c}i\'{c}$^3$}

\address{$^{1}$ Department of Mathematics, Osaka Kyoiku University, Kashiwara, Osaka 582-8582, Japan.}
\email{mfujii@cc.osaka-kyoiku.ac.jp}

\address{$^{2}$ Department of Pure Mathematics, Center of Excellence in
Analysis on Algebraic Structures (CEAAS), Ferdowsi University of
Mashhad, P. O. Box 1159, Mashhad 91775, Iran.}
\email{moslehian@ferdowsi.um.ac.ir and moslehian@ams.org}
\urladdr{\url{http://www.um.ac.ir/~moslehian/}}

\address{$^{3}$ Faculty of Mechanical Engineering and Naval Architecture, University of Zagreb,
Ivana Lu\v ci\' ca 5, 10000 Zagreb, Croatia.}
\email{jmicic@fsb.hr}
\urladdr{\url{http://www.fsb.hr/matematika/jmicic/}}

\dedicatory{Dedicated to Professor Themistocles M. Rassias on his 60th birthday with respect}

\subjclass[2010]{Primary 47A63; Secondary 26D15.}

\keywords{Bohr inequality; operator inequality; norm
inequality; matrix order; Jensen inequality.}

\begin{abstract}
We survey several significant results on the Bohr inequality and presented its generalizations in some new approaches. These are some Bohr type inequalities of Hilbert space operators related to the matrix order and the Jensen inequality. An eigenvalue extension of Bohr's inequality is discussed as well.
\end{abstract} \maketitle

\section{Bohr inequalities for operators} \label{BohrS1}

The classical Bohr inequality says that
$$|a+b|^2\leq p|a|^2+q|b|^2$$
holds for all scalars $a, \ b$ and $p, q>0$ with $1/p+1/q=1$ and the equality holds if and only if $(p-1)a=b$, cf. \cite{BOH}. There have been established many interesting extensions of this inequality in various settings by several mathematicians. Some interesting extensions of the classical Bohr inequality were given by Th.M. Rassias in \cite{RAS1}. In 1993, Th.M. Rassias and Pe\v{c}ari\'c \cite{RAS2} generalized the Bohr inequality by showing that if $(X,\Vert\cdot\Vert)$ is a normed linear space, $f: {\mathbb{R}}_+\to {\mathbb{R}}_+$ is a nondecreasing convex function, $p_1>0$, $p_ j\le 0$ $(j=2,\dots,n)$ and $\sum\sp n\sb{j=1} p_j>0$, then $$f\left(\Vert\sum\sp n\sb{j=1} p_j x_j\Vert/\sum\sp n\sb{j=1} p_j\right) \ge \sum\sp n\sb{j=1} p_j f(\Vert x_j\Vert)/\sum\sp n\sb{j=1} p_j$$ holds for every $x_j\in X$, $j=1,\ldots,n$.

In 2003, Hirzallah \cite{HIR} showed that if $A,B$ belong to the algebra ${\mathbb B}({\mathscr H})$ of all bounded linear operators on a complex (separable) Hilbert space ${\mathscr H}$ and $q \geq p>1$ with $1/p + 1/q = 1$, then
\begin{eqnarray}\label{her}
|A-B|^2 +|(p-1)A + B|^2 \leq p|A|^2+q|B|^2\,,
\end{eqnarray}
where $|C|:=(C^*C)^{1/2}$ denotes the absolute value of $C \in
{\mathbb B}({\mathscr H})$. He also showed that if $X \in {\mathbb B}({\mathscr H})$ such that $X\geq \gamma I$ for some positive number $\gamma$, then $$\gamma|||\,|A-B|^2\,|||\leq |||p|A|^2X+qX|B|^2|||$$ holds for every unitarily invariant norm $|||\cdot|||$. Recall that a unitarily invariant norm $|||\cdot|||$ is defined on a norm ideal $\mathcal{C}_{|||\cdot|||}$ of $\mathbb{B}(\mathscr{H})$ associated with it and has the property
$|||UAV|||=|||A|||$ for all unitary $U$ and $V$ and $A \in\mathcal{C}_{|||\cdot|||}$.

In 2006, Cheung and Pe\v{c}ari\'{c} \cite{C-P} extended inequality \eqref{her} for all positive conjugate exponents $p, q \in {\mathbb R}$. Also the authors of \cite{CCPZ} generalized the Bohr inequality to the setting of $n$-inner product spaces.

In 2007, Zhang \cite{ZHA} generalized inequality \eqref{her} by removing the
condition $q \geq p$ and presented the identity $$ |A-B|^2+|\sqrt{p/q}A+\sqrt{q/p}B|^2 = p|A|^2+q|B|^2\,\quad \text{for}\,\ A,B \in {\mathbb B}({\mathscr H}).$$
In addition, he proved that for any positive integer $k$ and $A_i \in {\mathbb B}({\mathscr H})$, $i=1,\ldots,n$
\begin{eqnarray}\label{zhang}
|t_1A_1+\cdots+t_kA_k|^2 \le t_1|A_1|^2+\cdots+t_k|A_k|^2\,,
\end{eqnarray}
holds for every $t_i>0$, $i=1,\ldots, k$ such that $\sum^k_{i=1}t_i=1$.

In 2009, Chansangiam, Hemchote and Pantaragphong \cite{chan} prove that if $A_i \in {\mathbb B}({\mathscr H})$, $\alpha_{ik}$ and $p_i$ are real numbers, $i=1,\ldots,n$, $k=1,\ldots,m$, such that the $n \times n$ matrix $X:=(x_{ij})$, defined by $ x_{ii}=\sum_{k=1}^{m} \alpha_{ik}^2-p_i$ and $x_{ij}=\sum_{k=1}^{m} \alpha_{ik}\alpha_{jk}$ for $i\neq j$, is positive semidefinite, then $$ \sum_{k=1}^{m} \left\vert\sum_{i=1}^n\alpha_{ik} A_i\right\vert^2 \geq \sum_{i=1}^n p_i\left\vert A_i\right\vert^2 .$$

In 2010, the first author and Zuo \cite{Z-F} had an approach to the Bohr inequality via a generalized parallelogram law for absolute value of operators, i.e.,
$$|A-B|^2+\frac{1}{t}\left|tA+B \right|^2=(1+t)|A|^2+\left( 1+\frac{1}{t} \right)|B|^2\,$$
holds for every $A, B \in {\mathbb B}({\mathscr H})$ and a real scalar $t\neq 0$.

In 2010, Abramovich, J. Bari\'c and J. Pe\v{c}ari\'c \cite{A-B-P} established new generalizations of Bohr's inequality by applying superquadraticity.

In 2010, the second author and Raji\'c \cite{M-R} presented a new operator equality in the framework of
Hilbert $C^*$-modules. Recall that the notion of Hilbert $C^*$-module $\mathscr{X}$ is a generalization of that of
Hilbert space in which the filed of scalars $\mathbb{C}$ is replaced
by a $C^*$-algebra $\mathscr{A}$. For every $x \in {\mathscr X}$ the absolute value of $x$ is defined as
the unique positive square root of $\langle x,x \rangle \in
\mathscr{A}$, that is, $|x|=\langle x,x \rangle ^\frac{1}{2}$. The authors of \cite{M-R} extended the operator Bohr inequalities of
\cite{C-P} and \cite{HIR}. One of their results extending \eqref{zhang} of Zhang \cite{ZHA} as follows. Suppose that
$n\ge 2$ is a positive integer, $T_1,\dots,T_n$ are adjointable operators on ${\mathscr X}$, $T_1^*T_2$ is self-adjoint,
$t_1,\dots,t_n$ are positive real numbers such that $\sum_{i=1}^n
t_i=1$ and $\sum_{i=1}^nt_i|T_i|^2=I_{\mathscr X}$. Assume that for $n \geq 3$,
$T_1$ or $T_2$ is invertible in algebra of all adjointable operators on ${\mathscr X}$,
operators $T_3,\dots,T_n$ are self-adjoint and $T_i|T_j|=|T_j|T_i$
for all $1 \leq i<j \leq n$. Then
\begin{eqnarray*}
|t_1T_1x_1+\cdots +t_nT_nx_n|^2\le t_1|x_1|^2+\cdots +t_n|x_n|^2
\end{eqnarray*}
holds for all $x_1,\dots,x_n\in \mathscr X$.

Vasi\'{c} and Ke\v{c}ki\'{c} \cite{V-K1} obtained a multiple
version of the Bohr inequality, which follows from the H\"older inequality.  In \cite{M-P-P}, the second author, Peri\'c and Pe\v{c}ari\'c established an operator version of the inequality of Vasi\'{c}--Ke\v{c}ki\'{c}. In 2011, Matharu, the second author and Aujla \cite{auj} gave an eigenvalue extension of Bohr's inequality. In the last section, we present a new approach to the main result of \cite{M-P-P}. The interested reader is referred to \cite{fmps} for many interesting results on the operator inequalities.
\bigskip

\section{Matrix approach to Bohr inequalities}

In this section, by utilizing the matrix order we present some Bohr type inequalities. For this, we
introduce two notations as follows.  For $x=(x_1, \cdots, x_n) \in \mathbb{R}^n$, we define $n \times n$ matrices $\Lambda(x) = x^*x = (x_ix_j)$ and $D(x) =$ diag$(x_1, \cdots, x_n)$.


\begin{theorem} \label{thm:Bohr21}
If $\Lambda(a) + \Lambda(b) \le D(c)$ for $a, b, c \in \mathbb{R}^n$, then $$
\left|\sum\limits_{i=1}\limits^{n}a_{i}A_i\right|^2 +
\left|\sum\limits_{i=1}\limits^{n}b_{i}A_i\right|^2
\le \sum\limits_{i=1}\limits^{n}c_{i}|A_i|^2
$$
for arbitrary $n$-tuple $(A_i)$ in ${\mathbb B}({\mathscr H})$.

Incidentally, the statement is correct even if the order is replaced by the reverse.
\end{theorem}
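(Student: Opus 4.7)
The plan is to recast the claim as a statement about a quadratic form in the operators $A_1,\ldots,A_n$ whose coefficient matrix is precisely $D(c)-\Lambda(a)-\Lambda(b)$. Expanding the absolute values gives
\begin{equation*}
\left|\sum_{i=1}^n a_iA_i\right|^2+\left|\sum_{i=1}^n b_iA_i\right|^2 = \sum_{i,j=1}^n \bigl(a_ia_j+b_ib_j\bigr)A_i^*A_j,
\end{equation*}
while $\sum_{i=1}^n c_i|A_i|^2 = \sum_{i,j=1}^n (D(c))_{ij}A_i^*A_j$ (a diagonal sum). Writing $M=(m_{ij})=D(c)-\Lambda(a)-\Lambda(b)$, the theorem therefore amounts to the operator inequality $\sum_{i,j} m_{ij}A_i^*A_j\ge 0$.

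The key step is the general fact that \emph{every} real positive semidefinite $n\times n$ matrix $M$ produces a positive operator $\sum_{i,j} m_{ij}A_i^*A_j$, regardless of the choice of $A_i$. I would verify this by factoring $M=B^TB$ with a real matrix $B=(b_{ki})$ (via Cholesky, or equivalently the spectral decomposition) and introducing $T_k:=\sum_j b_{kj}A_j$, so that
\begin{equation*}
\sum_{i,j=1}^n m_{ij}A_i^*A_j \;=\; \sum_{k}\left(\sum_i b_{ki}A_i\right)^{\!*}\left(\sum_j b_{kj}A_j\right) \;=\; \sum_{k}|T_k|^2 \;\ge\; 0.
\end{equation*}
By hypothesis $M\ge 0$, and this lemma delivers the desired inequality at once.

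For the parenthetical assertion, if instead $\Lambda(a)+\Lambda(b)\ge D(c)$, then $-M\ge 0$, and applying the same lemma to $-M$ reverses every inequality above and yields the companion statement $\sum_i c_i|A_i|^2 \le \left|\sum_i a_iA_i\right|^2+\left|\sum_i b_iA_i\right|^2$.

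I do not foresee any serious obstacle: the only non-routine ingredient is the factorization lemma, which reduces to a one-line computation after $M=B^TB$ is in hand. The conceptual content of the theorem is simply the observation that the hypothesis $\Lambda(a)+\Lambda(b)\le D(c)$ is \emph{exactly} the scalar matrix positivity that feeds into this lemma, so the apparently intricate operator inequality is a transparent consequence of a single positive semidefinite factorization in $M_n(\mathbb{R})$.
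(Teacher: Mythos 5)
Your proposal is correct and is essentially the paper's own argument: the paper defines the positive linear map $\Phi(X)=(A_1^*\cdots A_n^*)\,X^T\,(A_1\cdots A_n)=\sum_{i,j}x_{ij}A_i^*A_j$ and applies it to the matrix inequality $\Lambda(a)+\Lambda(b)\le D(c)$, which is exactly your reduction to the quadratic form with coefficient matrix $M=D(c)-\Lambda(a)-\Lambda(b)$. The only difference is that you explicitly verify positivity of this map via the factorization $M=B^TB$, whereas the paper simply invokes that $\Phi$ is a positive linear mapping; your version is marginally more self-contained but not a different route.
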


\begin{proof}
We define a positive linear mapping $\Phi$ from ${\mathbb B}({\mathscr H})^n$ to ${\mathbb B}({\mathscr H})$ by
$$\Phi(X) = (A_1^* \cdots A_n^*) \ X^T \ (A_1 \cdots A_n)\,,$$
where $\cdot^T$ denotes the transpose operation. Since $\Lambda (a) =\ (a_1, \cdots, a_n)^T (a_1, \cdots, a_n)$, we have
$$ \Phi(\Lambda(a))=
\left(\sum\limits_{i=1}\limits^{n}a_{i}A_i\right)^*\left(\sum\limits_{i=1}\limits^{n}a_{i}A_i\right)
=\left|\sum\limits_{i=1}\limits^{n}a_{i}A_i\right|^2,
$$
so that
$$
\left|\sum\limits_{i=1}\limits^{n}a_{i}A_i\right|^2 +
\left|\sum\limits_{i=1}\limits^{n}b_{i}A_i\right|^2
= \Phi(\Lambda(a) + \Lambda(b))
\le \Phi(D(c))
= \sum\limits_{i=1}\limits^{n}c_{i}|A_i|^2.
$$
The additional part is easily shown by the same way.
\end{proof}


The meaning of Theorem \ref{thm:Bohr21} will be well explained in the following theorem.

\begin{theorem} \label{thm:Bohr22} Let $t \in {\mathbb{R}}$.

$(${\rm i}$)$ \quad  If $0< t\leq 1$, then
$$|A\mp B|^2+|tA \pm B|^2\leq(1+t)|A|^2+(1+\frac{1}{t})|B|^2.$$

$(${\rm ii}$)$ \quad  If $ t\geq 1$ or $t<0$, then
$$|A \mp B|^2+|tA\pm B|^2\geq(1+t)|A|^2+(1+\frac{1}{t})|B|^2.$$
\end{theorem}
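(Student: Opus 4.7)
The plan is to reduce both parts to Theorem~\ref{thm:Bohr21} by making the right choice of vectors $a,b,c \in \mathbb{R}^2$ and then analyzing the $2\times 2$ matrix $D(c) - \Lambda(a) - \Lambda(b)$. Concretely, I would take $n=2$, $A_1 = A$, $A_2 = B$, and set
$$ a = (1,\mp 1), \qquad b = (t, \pm 1), \qquad c = \bigl(1+t,\, 1+\tfrac{1}{t}\bigr), $$
so that $\Phi(\Lambda(a)) = |A\mp B|^2$, $\Phi(\Lambda(b)) = |tA\pm B|^2$, and $\Phi(D(c)) = (1+t)|A|^2 + (1+\tfrac{1}{t})|B|^2$ in the notation of Theorem~\ref{thm:Bohr21}. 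The whole theorem will then follow once I control the sign of $D(c) - \Lambda(a) - \Lambda(b)$.

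Next I would compute this $2\times 2$ matrix directly. The diagonal entries are $1+t-1-t^2 = t(1-t)$ and $1+\tfrac{1}{t}-2 = \tfrac{1-t}{t}$, while the off-diagonal entries are $\pm(1-t)$. Factoring out $(1-t)$ gives
$$ D(c) - \Lambda(a) - \Lambda(b) = (1-t)\begin{pmatrix} t & \pm 1 \\ \pm 1 & 1/t \end{pmatrix} =: (1-t)\, M_t. $$
The matrix $M_t$ has determinant $t\cdot\tfrac{1}{t} - 1 = 0$ and trace $t + \tfrac{1}{t}$; so $M_t \ge 0$ whenever $t > 0$, and $M_t \le 0$ whenever $t < 0$.

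Now I would simply chase signs. In case (i), $0 < t \le 1$ gives $1-t \ge 0$ and $M_t \ge 0$, hence $\Lambda(a)+\Lambda(b) \le D(c)$, and Theorem~\ref{thm:Bohr21} yields the claimed inequality. In case (ii) with $t \ge 1$ we have $1-t \le 0$ while $M_t \ge 0$, so $\Lambda(a)+\Lambda(b) \ge D(c)$; while for $t<0$ we have $1-t > 0$ and $M_t \le 0$, so again $\Lambda(a)+\Lambda(b) \ge D(c)$. In both subcases the reverse-order part of Theorem~\ref{thm:Bohr21} delivers the reverse inequality.

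No step here is a genuine obstacle; the only place I have to be careful is the bookkeeping of the $\mp,\pm$ signs (which is why $M_t$ retains the $\pm 1$ off-diagonals) and checking that $M_t$ has the opposite definiteness when $t$ becomes negative, which is what makes the single formula in (ii) handle both the $t \ge 1$ and $t < 0$ regimes.
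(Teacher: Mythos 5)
Your proposal is correct and is essentially identical to the paper's own proof: the same choice $a=(1,\mp1)$, $b=(t,\pm1)$, $c=(1+t,1+\tfrac{1}{t})$, the same factorization $D(c)-\Lambda(a)-\Lambda(b)=(1-t)M_t$ with $\det M_t=0$, and the same sign analysis feeding into Theorem~\ref{thm:Bohr21}. Your explicit use of the trace of $M_t$ to settle its semidefiniteness is a slightly more careful justification than the paper's, but the route is the same.
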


\begin{proof} We apply Theorem~\ref{thm:Bohr21} to $a=(1, \mp1)$, $b=(t, \pm 1)$ and $c=(1+t,1+1/t)$. Then
we consider the order between corresponding matrices:
$$
T =
 \left(\begin{array}{cc}
 1+ t  &  0  \\
         0  & 1+\frac{1}{t}
\end{array}\right)
- \left(\begin{array}{cc}
 1  &  \mp 1  \\
\mp 1  &   1
\end{array}\right)
- \left(\begin{array}{cc}
  t^2  & \pm t  \\
\pm  t    &  1
\end{array}\right)
=
(1-t) \left(\begin{array}{cc}
  t   &  \pm 1  \\
\pm  1     &  \frac{1}{t}
\end{array}\right).
$$
Since $\det(T) = 0$, $T$ is positive semidefinite   (resp. negative semidefinite) if $0<t< 1$ (resp. $t>1$ or $t<0$).
\end{proof}

As another application of Theorem \ref{thm:Bohr21}, we give a new proof of \cite[Theorem 7]{ZHA} as follows.

\begin{theorem} \label{thm:Bohr14}
If $A_i\in {\mathbb B}({\mathscr H})$ and $r_{i}\geq 1$, $ i=1,\ldots,n$, with
$\sum\limits_{i=1}\limits^{n}\frac{1}{r_{i}}=1$, then
$$|\sum\limits_{i=1}\limits^{n} A_i|^2\leq \sum\limits_{i=1}\limits^{n}r_{i}|A_i|^2.$$
\end{theorem}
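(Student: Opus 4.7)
The plan is to reduce this directly to Theorem~\ref{thm:Bohr21} by a suitable choice of the triple $(a,b,c)\in\mathbb{R}^n\times\mathbb{R}^n\times\mathbb{R}^n$. The most natural choice, given that only one sum appears on the left-hand side of the desired inequality, is to take $a=(1,1,\ldots,1)$, $b=(0,0,\ldots,0)$, and $c=(r_1,\ldots,r_n)$. With this choice, the left-hand side of Theorem~\ref{thm:Bohr21} becomes $\left|\sum_{i=1}^n A_i\right|^2+0$, and the right-hand side becomes $\sum_{i=1}^n r_i|A_i|^2$, exactly matching the inequality we want to prove.

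What remains is to verify the hypothesis $\Lambda(a)+\Lambda(b)\le D(c)$ for this choice. Since $b=0$ gives $\Lambda(b)=0$, and $\Lambda(a)$ is simply the $n\times n$ matrix $J$ whose entries are all equal to $1$, the inequality to check reduces to the scalar matrix inequality
$$
J\;\le\;\operatorname{diag}(r_1,\ldots,r_n).
$$
The main (and only real) step is establishing this. I would prove it directly from the quadratic form: for any $v=(v_1,\ldots,v_n)\in\mathbb{R}^n$,
$$
v^T J v=\left(\sum_{i=1}^n v_i\right)^2=\left(\sum_{i=1}^n \tfrac{1}{\sqrt{r_i}}\cdot\sqrt{r_i}\,v_i\right)^2\le\left(\sum_{i=1}^n \tfrac{1}{r_i}\right)\left(\sum_{i=1}^n r_i v_i^2\right)=\sum_{i=1}^n r_i v_i^2 =v^T D(c) v,
$$
by the Cauchy--Schwarz inequality and the hypothesis $\sum 1/r_i=1$. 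Hence $D(c)-J\ge 0$.

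The hypothesis $r_i\ge 1$ is not directly used in this chain, but it is consistent with (in fact implied by) the constraint $\sum 1/r_i=1$ with $n\ge 2$ terms; the essential ingredient is only $\sum 1/r_i=1$ together with positivity of the $r_i$. There is no real obstacle here: once the reduction to Theorem~\ref{thm:Bohr21} is spotted, the matrix inequality is a one-line Cauchy--Schwarz computation, so the whole argument fits in a few lines.
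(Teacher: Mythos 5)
Your proof is correct and follows essentially the same route as the paper: both reduce the statement to Theorem~\ref{thm:Bohr21} with $a=(1,\dots,1)$, $b=0$, $c=(r_1,\dots,r_n)$, so that everything hinges on the single matrix inequality $J\le \mathrm{diag}(r_1,\dots,r_n)$ where $J$ is the all-ones matrix. The only difference is in how that inequality is verified --- you use a Cauchy--Schwarz estimate on the quadratic form, while the paper checks that all principal minors satisfy $\det(D_k-C_k)=(r_{i_1}\cdots r_{i_k})\bigl(1-\sum_{j}1/r_{i_j}\bigr)\ge 0$ --- and both verifications are valid.
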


In other words, it says that $K(z)=|z|^2$ satisfies the (operator) Jensen inequality
$$
\left| \sum\limits_{i=1}\limits^{n}t_{i}A_i \right|^{2} \le \sum\limits_{i=1}\limits^{n}t_{i} \left| A_i \right|^{2} \quad \text{for $A_i\in {\mathbb B}({\mathscr H})$ and $t_i >0$, $ i=1,\ldots,n$, with $\sum\limits_{i=1}\limits^{n}t_{i} = 1$.}$$

\begin{proof}
We check the order between the corresponding matrices $D=$ diag$(r_1, \cdots, r_n)$ and $C=(c_{ij})$ where $c_{ij} = 1$.  All principal minors of $D - C$ are nonnegative and it follows that $C \leq D$. Really, for natural numbers $k \leq n$, put $D_k=$ diag$(r_{i_1}, \cdots, r_{i_k})$, $C_k=(c_{ij})$ with $c_{ij} = 1$, $i,j=1,\ldots,k$ and $R_k=\sum_{j=1}^{k} 1/ r_{i_j}$ where $1\leq r_{i_1} < \cdots < r_{i_k} \leq n$. Then
$$\det(D_k-C_k) = (r_{i_1} \cdot \cdots \cdot r_{i_k})(1-R_k) \geq 0 \quad \mbox{for arbitrary $k \leq n$}.$$
  Hence we have the conclusion of our Theorem by Theorem \ref{thm:Bohr21}.
\end{proof}


As another application of Theorem \ref{thm:Bohr21}, we give a new proof of \cite[Theorem 7]{ZHA} as follows.

\begin{corollary}
If $a=(a_1, a_2)$, $b=(b_1, b_2)$ and $p=(p_1, p_2)$ satisfy
$ p_1 \ge a_1^2 + b_1^2$,  $p_2 \ge a_2^2 + b_2^2$ and
$(p_1 - (a_1^2 + b_1^2))(p_2 - (a_2^2 + b_2^2)) \ge (a_1a_2 + b_1b_2)^2, $
then
$$
|a_1A + a_2 B|^2 + |b_1A + b_2 B|^2  \le p_1|A|^2 + p_2|B|^2
$$
for all $A, B \in {\mathbb B}({\mathscr H})$.

\end{corollary}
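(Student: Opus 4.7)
The plan is to invoke Theorem~\ref{thm:Bohr21} with the choice $c = (p_1, p_2)$, so that the desired inequality becomes precisely the conclusion of that theorem. All the work then reduces to verifying the single matrix inequality $\Lambda(a) + \Lambda(b) \le D(c)$ in $M_2(\mathbb{R})$, which the three scalar hypotheses are tailor-made to supply.

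Concretely, I would first compute
\[
D(p) - \Lambda(a) - \Lambda(b) = \begin{pmatrix} p_1 - a_1^2 - b_1^2 & -(a_1 a_2 + b_1 b_2) \\ -(a_1 a_2 + b_1 b_2) & p_2 - a_2^2 - b_2^2 \end{pmatrix},
\]
and then recall the standard criterion that a real symmetric $2\times 2$ matrix is positive semidefinite exactly when both diagonal entries are nonnegative and the determinant is nonnegative. The two diagonal conditions are the hypotheses $p_1 \ge a_1^2 + b_1^2$ and $p_2 \ge a_2^2 + b_2^2$, while the determinant condition
\[
(p_1 - a_1^2 - b_1^2)(p_2 - a_2^2 - b_2^2) \ge (a_1a_2 + b_1b_2)^2
\]
is the third hypothesis (the sign of the off-diagonal entry being irrelevant after squaring).

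Having established $\Lambda(a) + \Lambda(b) \le D(p)$, the two-term case ($n = 2$) of Theorem~\ref{thm:Bohr21} applied to the pair $(A_1, A_2) = (A, B)$ delivers the asserted operator inequality immediately. There is no real obstacle here: the corollary is essentially just the translation of Theorem~\ref{thm:Bohr21} into the explicit Sylvester-type criterion for positivity of a $2\times 2$ matrix, and the only mild point to watch is to observe that the off-diagonal entry of $D(p) - \Lambda(a) - \Lambda(b)$ enters the determinant criterion only through its square, which is why the hypothesis is phrased without an absolute value on $a_1 a_2 + b_1 b_2$.
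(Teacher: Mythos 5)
Your proposal is correct and follows the same route as the paper: both reduce the corollary to the observation that the three scalar hypotheses are exactly the positive semidefiniteness conditions for $D(p)-\Lambda(a)-\Lambda(b)$, and then invoke Theorem~\ref{thm:Bohr21} with $n=2$. You merely spell out the $2\times 2$ Sylvester criterion that the paper leaves implicit.
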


\begin{proof}
Since the assumption of the above is nothing but the matrix inequality
$\Lambda(a) + \Lambda(b) \le D(p)$,  Theorem \ref{thm:Bohr21} implies the conclusion.
\end{proof}

Concluding this section, we observe the monotonicity of the operator function
$F(a) = \left| \sum\limits_{i=1}\limits^{n} a_i A_i \right|^2$.

\begin{corollary} \label{order-preserving}
For a fixed $n$-tuple $(A_i)$ in ${\mathbb B}({\mathscr H})$, the operator function $F(a) = \left| \sum\limits_{i=1}\limits^{n} a_iA_i \right|^2$ for $a= (a_1, \cdots, a_n)$ preserves the order operator inequalities, that is,
$$ \text{if \:\: $\Lambda(a) \le \Lambda(b)$, \:\: then \:\: $F(a) \leq F(b)$}.$$
\end{corollary}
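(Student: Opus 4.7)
The plan is to re-use the positive linear map $\Phi$ constructed in the proof of Theorem~\ref{thm:Bohr21}, namely $\Phi(X) = (A_1^* \cdots A_n^*)\, X^T\, (A_1\ \cdots\ A_n)^T$, sending $n\times n$ scalar matrices to ${\mathbb B}({\mathscr H})$. The key identity already observed there is
$$\Phi(\Lambda(a)) = \left(\sum_{i=1}^n a_i A_i\right)^{\!*}\!\left(\sum_{i=1}^n a_i A_i\right) = F(a),$$
and the analogous equality holds for $b$. So the entire statement $F(a)\le F(b)$ will follow at once from the monotonicity of $\Phi$ on the self-adjoint part of the matrix algebra applied to the hypothesis $\Lambda(a)\le\Lambda(b)$.

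Thus the proof reduces to invoking the order-preserving property of $\Phi$: for self-adjoint matrices $X\le Y$ we have $\Phi(X)\le\Phi(Y)$ because $\Phi$ is positive and linear. Applying this with $X=\Lambda(a)$ and $Y=\Lambda(b)$ gives $F(a)=\Phi(\Lambda(a))\le\Phi(\Lambda(b))=F(b)$, which is the claimed monotonicity. There is no genuine obstacle here — the corollary is essentially a repackaging of the mechanism already used to prove Theorem~\ref{thm:Bohr21}, with the inequality $\Lambda(a)+\Lambda(b)\le D(c)$ replaced by the simpler comparison $\Lambda(a)\le\Lambda(b)$; the only thing worth noting is that one need not assume $\Lambda(a),\Lambda(b)$ come from the rank-one construction for the argument to go through, so in fact the same reasoning yields the slightly stronger statement that $X\mapsto\Phi(X)$ is itself order preserving on self-adjoint $n\times n$ matrices.
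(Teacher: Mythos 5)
Your proposal is correct and coincides with the paper's own argument: the authors likewise write $F(a)=\Phi(\Lambda(a))$ with $\Phi$ the positive linear map from the proof of Theorem~\ref{thm:Bohr21} and conclude by the order-preserving property of a positive linear map. Nothing further is needed.
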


\begin{proof}
We prove this putting $F(a)=\Phi(a^*a)$, where $\Phi$ is  a positive linear mapping as in the proof of Theorem~\ref{thm:Bohr21}.
\end{proof}

The following corollary is 3D version of \cite[Lemma 2]{ZHA}.

\begin{corollary}
 If $a=(a_1, a_2, a_3)$ and $b=(b_1, b_2, b_3)$ satisfy $|a_i| \le |b_i|$ for $i=1, 2, 3$ and $a_i b_j = a_j b_i$ for $i \not= j$, then $F(a) \le F(b)$.
\end{corollary}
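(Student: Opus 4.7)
My plan is to reduce the claim to Corollary \ref{order-preserving}: once I verify the matrix inequality $\Lambda(a)\le\Lambda(b)$ in $\mathbb{R}^{3\times 3}$, the order-preservation of $F$ delivers $F(a)\le F(b)$ at once. So the entire task boils down to showing that $M:=\Lambda(b)-\Lambda(a)$, whose $(i,j)$-entry is $b_ib_j-a_ia_j$, is positive semidefinite.

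The crucial observation is that the hypothesis $a_ib_j=a_jb_i$ for $i\ne j$ is exactly the vanishing of $\det\begin{pmatrix}a_i & a_j\\ b_i & b_j\end{pmatrix}$, so the three column vectors $v_i:=(a_i,b_i)\in\mathbb{R}^2$ are pairwise linearly dependent. Assuming we are not in the trivial situation $a=b=0$, some $v_k$ is nonzero; writing $v_k=(t,s)$, each $v_i$ must then be a scalar multiple $v_i=\gamma_i v_k$. Setting $\gamma=(\gamma_1,\gamma_2,\gamma_3)$, this gives the uniform representation $a_i=\gamma_it$ and $b_i=\gamma_is$. Applying the hypothesis $|a_i|\le|b_i|$ at the index $i=k$ (where $\gamma_k=1$) yields $|t|\le|s|$.

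A direct substitution then produces
\[
M_{ij}=\gamma_i\gamma_j\bigl(s^{2}-t^{2}\bigr),
\]
so $M=(s^2-t^2)\,\gamma^{T}\gamma$. Since $s^2-t^2\ge 0$ and $\gamma^{T}\gamma$ is rank-one positive semidefinite, we obtain $M\ge 0$ as required.

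The only point demanding care is the bookkeeping around the degenerate configurations: one must check that the ``direction'' $(t,s)$ we extracted does not have $s=0$, for otherwise $|t|\le|s|$ would force $t=0$ and hence $v_k=0$, contradicting its choice. After this small verification --- and the separate trivial treatment of $a=b=0$ --- the main content is the collinearity reinterpretation; everything else is routine.
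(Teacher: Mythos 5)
Your argument is correct: like the paper, you reduce everything to Corollary~\ref{order-preserving} by establishing the matrix inequality $\Lambda(a)\le\Lambda(b)$, and your verification of that inequality is sound (the degenerate cases are handled properly, and the case $s=0$ discussion, while not strictly needed once you have $M=(s^2-t^2)\,\gamma^{T}\gamma$, does no harm). Where you diverge is in \emph{how} positive semidefiniteness of $M=\Lambda(b)-\Lambda(a)$ is certified. The paper computes directly that every $2\times 2$ minor of $M$ vanishes (via the identity $a_kb_j(a_ib_l-b_ia_l)+a_jb_k(b_ia_l-a_ib_l)=0$), so $M$ has rank at most one, and then combines this with the nonnegativity of the diagonal entries $b_i^2-a_i^2$ to conclude $M\ge 0$ from the nonnegativity of all principal minors. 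You instead reinterpret the hypothesis $a_ib_j=a_jb_i$ as collinearity of the vectors $(a_i,b_i)$, extract the common direction $(t,s)$ with $|t|\le|s|$, and exhibit $M$ explicitly as the nonnegative multiple $(s^2-t^2)\,\gamma^T\gamma$ of a rank-one Gram matrix. Your route makes the rank-one structure of $M$ transparent rather than discovering it a posteriori through minors, and it generalizes verbatim from $3$ to $n$ components; the paper's computation is more elementary but tied to checking minors entry by entry. Both are complete proofs.
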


\begin{proof}
 It follows from assumptions  that if $i \not= l$ and $j \not= k$, then
$$
 \left|\begin{array}{cc}
  a_ia_j-b_ib_j   &   a_ia_k-b_ib_k  \\
  a_la_j-b_lb_j   &   a_la_k-b_lb_k
\end{array}\right|
= a_kb_j(a_ib_l - b_ia_l) + a_jb_k(b_ia_l - a_ib_l) = 0.
$$ This means that all 2nd principal minors of $\Lambda(b) - \Lambda(a)$ are zero. It follows that det$(\Lambda(b) - \Lambda(a))=0$. Since the diagonal elements satisfy $|a_i| \le |b_i|$ for $i=1, 2, 3$, we
have the matrix inequality $\Lambda(a) \le \Lambda(b)$. Now it is sufficient to apply Corollary~\ref{order-preserving}.
\end{proof}

\vspace{3mm}


\section{A generalization of the operator Bohr inequality via the operator Jensen inequality}

As an application of the operator Jensen inequality, in this section we consider a generalization of the operator Bohr inequality. Namely the Jensen inequality implies the Bohr inequality even in the operator case.

For this, we first target the following inequality which is an extension of the
Bohr inequality, precisely, it is a multiple version of the Bohr inequality due to Vasi\'{c} and Ke\v{c}ki\'{c} \cite{V-K1} as follows. If $r>1$ and
$a_1, \cdots ,a_n > 0$, then
\begin{equation}\label{3.1}
\left|\sum z_i\right|^r \le \left(\sum a_i^{\frac 1{1-r}}\right)^{r-1} \sum a_i|z_i|^r
\end{equation}
holds for all $z_1, \cdots ,z_n \in \mathbb{C}$.

We note that it follows from H\"older inequality. Actually,  $p= \frac r{r-1}$ and $q=r$ are conjugate, i.e., $ \frac 1p + \frac 1q = 1$. We here set $$  u_i=a_i^{- \frac 1q}; \ w_i = u_i^{-1}z_i,  \ \ i=1,\ldots,n$$ and apply them to the H\"older inequality. Then we have
$$
\left|\sum_{i=1}^n z_i\right|^r=\left|\sum_{i=1}^n u_iw_i\right|^r
\leq \left(\sum_{i=1}^n |u_i|^p \right)^{\frac rp} \left(\sum_{i=1}^n |w_i|^q \right)^{\frac rq}
=\left(\sum_{i=1}^n a_i^{\frac 1{1-r}}\right)^{r-1} \sum_{i=1}^n a_i|z_i|^r.
$$
Now we propose its operator extension, see also \cite{M-P-P, auj}.

For the sake of convenience, we recall some notations and definitions.

\noindent Let ${\mathscr A}$ be a $C^*$-algebra of Hilbert space operators
and let $T$ be a locally compact Hausdorff space. A field
$(A_t)_{t\in T}$ of operators in ${\mathscr A}$ is called a
continuous field of operators if the function $t \mapsto A_t$ is
norm continuous on $T$. If $\mu$ is a Radon measure on $T$ and the
function $t \mapsto \|A_t\|$ is integrable, then one can form the
Bochner integral $\int_{T}A_t{\rm d}\mu(t)$, which is the unique
element in ${\mathscr A}$ such that
$$\varphi\left(\int_TA_t{\rm d}\mu(t)\right)=\int_T\varphi(A_t){\rm d}\mu(t)$$
for every linear functional $\varphi$ in the norm dual ${\mathfrak
A}^*$ of ${\mathscr A}$; cf. \cite[Section 4.1]{H-P}.\\
Furthermore, a field $(\varphi_t)_{t\in T}$ of positive linear
mappings $\varphi_t: {\mathscr A} \to {\mathscr B}$ between
$C^*$-algebras of operators is called continuous if the function
$t \mapsto \varphi_t(A)$ is continuous for every $A \in
{\mathscr A}$. If the $C^*$-algebras include the identity
operators (i.e. they are unital $C^*$-algebras), denoted by the same $I$, and the field $t \mapsto
\varphi_t(I)$ is integrable with integral equals $I$, then we say that
$(\varphi_t)_{t \in T}$ is unital.

Recall that a continuous real function $f$ defined on a real
interval $J$ is called operator convex if
$f(\lambda A+(1-\lambda)B) \leq \lambda f(A)+ (1-\lambda)f(B)$
holds for all $\lambda \in [0,1]$ and all self-adjoint operators
$A, B$ acting on a Hilbert space with spectra in $J$.

Now, we cite the Jensen inequality for our use below.

\begin{theorem}\label{th-1}\cite[Theorem 2.1]{H-P-P}
Let $f$ be an operator convex function on an interval $J$, let $T$ be a locally compact Hausdorff space with a bounded Radon measure $\mu$, and let   $\mathscr{A}$ and $\mathscr{B}$ be unital $C^*$-algebras.  If $(\psi_t)_{t \in T}$ is a unital field of positive linear mappings $\psi_t: \mathscr{A} \to \mathscr{B}$, then
$$ f\left(\int_T \psi_t(A_t)d\mu(t) \right)  \le \int_T\psi_t(f(A_t))d\mu(t) $$
holds for all bounded continuous fields $(A_t)_{t \in T}$ of selfadjoint elements in $\mathscr{A}$ whose spectra are contained in $J$.
\end{theorem}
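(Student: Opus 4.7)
The plan is to reduce the integral version to the classical Davis--Choi--Jensen inequality for a single unital positive linear map by bundling the field $(\psi_t)_{t\in T}$ into one map between $C^*$-algebras. First I would introduce the $C^*$-algebra $\mathscr{C}$ of bounded norm--continuous fields $B=(B_t)_{t\in T}$ with $B_t\in\mathscr{A}$, equipped with the pointwise algebraic operations and the sup norm $\|B\|=\sup_{t\in T}\|B_t\|$. Elements of $\mathscr{C}$ are exactly the bounded continuous fields considered in the statement. The continuous functional calculus in $\mathscr{C}$ is computed pointwise: if $B=(B_t)_{t\in T}\in\mathscr{C}$ is self-adjoint with $\mathrm{sp}(B_t)\subseteq J$ for every $t$, then $f(B)\in\mathscr{C}$ with $(f(B))_t=f(B_t)$, since the functional calculus is continuous on norm--bounded self-adjoint sets.

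Next, I would define a single map $\Phi:\mathscr{C}\to\mathscr{B}$ by the Bochner integral
$$
\Phi(B)=\int_{T}\psi_t(B_t)\,\mathrm{d}\mu(t).
$$
Linearity is immediate, and positivity follows from the pointwise positivity of each $\psi_t$ together with order preservation of the Bochner integral; well-definedness and boundedness use that $\mu$ is a bounded Radon measure and that $t\mapsto\psi_t(B_t)$ is norm continuous and bounded. Unitality of $\Phi$ is precisely the hypothesis that $(\psi_t)_{t\in T}$ is a unital field, namely $\int_{T}\psi_t(I)\,\mathrm{d}\mu(t)=I$.

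With $\Phi$ in hand, I would invoke the Davis--Choi--Jensen inequality, which says that for every unital positive linear map $\Phi$ between unital $C^*$-algebras, every operator convex function $f$ on $J$, and every self-adjoint element $B$ with $\mathrm{sp}(B)\subseteq J$, one has $f(\Phi(B))\le\Phi(f(B))$. Applied to our $\Phi$ and to $B=(A_t)_{t\in T}\in\mathscr{C}$, together with the pointwise identification $\Phi(f(B))=\int_{T}\psi_t(f(A_t))\,\mathrm{d}\mu(t)$, this yields the desired inequality.

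The main obstacle I anticipate is the spectral reduction step: one must check that the spectrum of $B=(A_t)_{t\in T}$ computed inside $\mathscr{C}$ is still contained in $J$, so that both the functional calculus and the single--map Jensen inequality can be invoked legitimately. This rests on the standard fact $\mathrm{sp}_{\mathscr{C}}(B)=\overline{\bigcup_{t\in T}\mathrm{sp}_{\mathscr{A}}(A_t)}$, which together with the assumption $\mathrm{sp}(A_t)\subseteq J$ and closedness of $J$ in $\mathbb{R}$ suffices. The Davis--Choi step itself, if one does not wish to cite it, admits a short proof via the Hansen--Pedersen $2\times 2$ block characterization of operator convexity, which reduces the general inequality to an application with a suitable isometric dilation of $\Phi$ in the spirit of Stinespring.
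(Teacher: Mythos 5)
This theorem is quoted in the paper from the reference \cite{H-P-P} without any proof, so there is no in-paper argument to compare against; your proposal is essentially the proof given in that cited source. Bundling the field into the single unital positive map $\Phi(B)=\int_T\psi_t(B_t)\,d\mu(t)$ on $C_b(T,\mathscr{A})$ and invoking the Davis--Choi (Hansen--Pedersen) inequality for one unital positive map is exactly the standard route, and the steps you list (pointwise functional calculus, positivity and unitality of $\Phi$, the spectral identity $\mathrm{sp}_{\mathscr{C}}(B)=\overline{\bigcup_t\mathrm{sp}(A_t)}$) are all sound. The only point deserving one more line is the caveat you already flagged: since $J$ need not be closed, $\overline{\bigcup_t\mathrm{sp}(A_t)}$ can touch an open endpoint of $J$; one either restricts to fields whose spectra lie in a common compact subset of $J$ (which is what makes the right-hand integral converge anyway) or handles that case by an approximation, as in the original reference.
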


\begin{theorem} \label{thm:Bohr51}
Let $T$ be a locally compact Hausdorff space with a bounded Radon measure $\mu$, and let
   $\mathscr{A}$ and $\mathscr{B}$
be unital $C^*$-algebras. If $1<r \leq 2$, $a:T \to \mathbb{R}$ is a  bounded continuous nonnegative function and $(\phi_t)_{t \in T}$ is a field of positive linear mappings $\psi_t: \mathscr{A} \to \mathscr{B}$ satisfying
$$
\int _T a(t)^{\frac 1{1-r}} \phi_t(I)d\mu(t) \le \int_T a(t)^{\frac 1{1-r}}d\mu(t) \, I
\,, $$
then
$$
\left(\int_T \phi_t(A_t)d\mu(t) \right)^{r}
\le \left(\int_T a(t)^{\frac 1{1-r}}d\mu(t)\right)^{r-1}
    \int_Ta(t)\phi_t(A_t^r)d\mu(t)
$$
holds for all continuous fields $(A_t)_{t \in T
}$ of positive elements in $\mathscr{A}$.
\end{theorem}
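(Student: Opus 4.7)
The plan is to apply the operator Jensen inequality (Theorem \ref{th-1}) to the operator convex function $f(x)=x^r$, which is operator convex on $[0,\infty)$ for $1<r\le 2$ and satisfies $f(0)=0$. To do so, I would mimic the scalar proof via H\"older given in the excerpt by absorbing the weights $a(t)$ into an auxiliary field of positive maps that is (essentially) unital, then rescaling the operators $A_t$ accordingly.

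First I would set $M=\int_T a(t)^{1/(1-r)}d\mu(t)$ and introduce the field
$$\psi_t(X)\;=\;M^{-1}a(t)^{1/(1-r)}\phi_t(X),\qquad t\in T,$$
of positive linear maps $\mathscr{A}\to\mathscr{B}$. The hypothesis is exactly
$$\int_T\psi_t(I)\,d\mu(t)\;\le\;I,$$
so $(\psi_t)_{t\in T}$ is a sub-unital continuous field. Next I would rescale the operators by putting
$$B_t\;=\;Ma(t)^{1/(r-1)}A_t\;\ge\;0,$$
so that a direct computation yields $\psi_t(B_t)=\phi_t(A_t)$ and hence
$$\int_T\phi_t(A_t)\,d\mu(t)\;=\;\int_T\psi_t(B_t)\,d\mu(t).$$

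Applying the Jensen inequality to $f(x)=x^r$ with the field $(\psi_t)$ then gives
$$\left(\int_T\psi_t(B_t)\,d\mu(t)\right)^{r}\;\le\;\int_T\psi_t(B_t^{r})\,d\mu(t),$$
and evaluating the right-hand side reduces the argument to a scalar bookkeeping: using $\frac{r}{r-1}+\frac{1}{1-r}=1$ I would get
$$\int_T\psi_t(B_t^{r})\,d\mu(t)\;=\;\int_T M^{-1}a(t)^{1/(1-r)}\cdot M^{r}a(t)^{r/(r-1)}\phi_t(A_t^{r})\,d\mu(t)\;=\;M^{r-1}\int_T a(t)\phi_t(A_t^{r})\,d\mu(t),$$
which, recalling $M^{r-1}=\bigl(\int_T a(t)^{1/(1-r)}d\mu(t)\bigr)^{r-1}$, is exactly the desired inequality.

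The main obstacle is that Theorem \ref{th-1} is stated for a \emph{unital} field, whereas $(\psi_t)$ is only sub-unital. Since $f(x)=x^r$ is operator convex on $[0,\infty)$ with $f(0)=0$, I would overcome this by the standard one-point-adjunction trick: adjoin a point $\ast$ to $T$ and extend the field by $\psi_{\ast}(X)=\omega(X)\bigl(I-\int_T\psi_t(I)d\mu(t)\bigr)$ for some fixed state $\omega$ on $\mathscr{A}$, together with $B_\ast=0$. The extended field is unital, Theorem \ref{th-1} applies, and the extra term on the right-hand side vanishes because $f(0)=0$, giving the sub-unital Jensen inequality that drives the proof.
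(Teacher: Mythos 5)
Your proof is correct and follows essentially the same route as the paper: set $M=\int_T a(t)^{1/(1-r)}d\mu(t)$, pass to the weighted field $\psi_t=M^{-1}a(t)^{1/(1-r)}\phi_t$, rescale the operators $A_t$, and apply the operator Jensen inequality (Theorem~\ref{th-1}) to the operator convex function $f(x)=x^r$. The only difference is that you make explicit, via the one-point adjunction with a state and $f(0)=0$, the reduction from the sub-unital to the unital case, which the paper dispenses with as ``routine.''
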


\begin{proof}
We set $\psi_t=\frac{1}{M} a(t)^{\frac 1{1-r}} \phi_t$, where $M=\int_T a(t)^{\frac 1{1-r}}d\mu(t) >0$.  Then we have $\int_T \psi_t(I)d\mu(t) \le I$.
By a routine way, we may assume that $\int_T \psi_t(I)d\mu(t) = I$.  Since $f(t)=t^r$ is operator convex for $1<r \leq 2$, then we applying Theorem~\ref{th-1}  we obtain
$$ \left(\int_T \frac{1}{M} a(t)^{\frac 1{1-r}} \phi_t(\tilde{A}_t)d\mu(t) \right)^r  \leq \int_T \frac{1}{M} a(t)^{\frac 1{1-r}} \phi_t(A_t^r)d\mu(t) $$ for every bounded continuous fields $(\tilde{A}_t)_{t \in T}$ of positive elements in $\mathscr{A}$. Replacing $\tilde{A}_t$ by $a(t)^{-1/(1-r)} A_t$, the above inequality can be written as
$$
\left(\int_T \phi_t(A_t)d\mu(t) \right)^r \leq M^{r-1} \int_Ta(t)\phi_t(A_t^r)d\mu(t)
$$
which is the desired inequality.
\end{proof}

\begin{remark}
We note that with the notation as in above and $$
\int _T a(t)^{\frac 1{1-r}} \phi_t(I)d\mu(t) \leq k \ \int_T a(t)^{\frac 1{1-r}}d\mu(t) \, I
\,, \qquad \text{for some $k>0$},$$
then
$$
\left(\int_T \phi_t(A_t)d\mu(t) \right)^{r}
\le k^{r-1} \left(\int_T a(t)^{\frac 1{1-r}}d\mu(t)\right)^{r-1}
    \int_Ta(t)\phi_t(A_t^r)d\mu(t).
$$
\end{remark}

For a typical positive linear mapping $\phi(A)=X^*AX$ for some $X$, Theorem~\ref{thm:Bohr51} can be written as follows.

\begin{corollary}  \label{cor:Bohr53}
Let $T$ be a locally compact Hausdorff space with a bounded Radon measure $\mu$, and let
   $\mathscr{A}$ be unital $C^*$-algebra. If $1<r \leq 2$, $a:T \to \mathbb{R}$ is a  bounded continuous nonnegative function and $(X_t)_{t \in T}$ is a bounded continuous field of elements in $\mathscr{A}$ satisfying
$$
\int _T a(t)^{\frac 1{1-r}} X_t^*X_t d\mu(t) \leq \int_T a(t)^{\frac 1{1-r}}d\mu(t) \, I
\,, $$
then
$$
\left(\int_T X_t^* A_t X_t d\mu(t) \right)^{r}
\leq \left(\int_T a(t)^{\frac 1{1-r}}d\mu(t)\right)^{r-1}
    \int_T a(t)X_t^* A_t^rX_t d\mu(t)
$$
holds for all continuous fields $(A_t)_{t \in T
}$ of positive elements in $\mathscr{A}$.
\end{corollary}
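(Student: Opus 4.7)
The plan is to deduce this corollary as a direct specialization of Theorem~\ref{thm:Bohr51}, taking $\mathscr{B} = \mathscr{A}$ and choosing the canonical positive linear mappings $\phi_t : \mathscr{A} \to \mathscr{A}$ defined by $\phi_t(A) = X_t^* A X_t$ for each $t \in T$. The corollary then follows once one checks that this choice of $(\phi_t)_{t \in T}$ meets the hypotheses of the theorem and that the conclusion rewrites into the stated form.

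First, I would verify the structural assumptions. For each fixed $t$, the map $A \mapsto X_t^* A X_t$ is clearly linear, and if $A \ge 0$ then $X_t^* A X_t \ge 0$, so $\phi_t$ is a positive linear mapping. The continuity of the field $t \mapsto \phi_t(A)$ follows from the assumption that $(X_t)_{t \in T}$ is a bounded continuous field: indeed, if $t_n \to t$ in $T$, then $X_{t_n} \to X_t$ in norm, and by uniform boundedness together with the submultiplicativity of the $C^*$-norm, $X_{t_n}^* A X_{t_n} \to X_t^* A X_t$ for every fixed $A \in \mathscr{A}$. Hence $(\phi_t)_{t \in T}$ is a continuous field of positive linear mappings in the sense required by Theorem~\ref{thm:Bohr51}.

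Next, I would translate the integrability hypothesis. Since $\phi_t(I) = X_t^* I X_t = X_t^* X_t$, the assumed inequality
$$\int_T a(t)^{\frac{1}{1-r}}\, X_t^*X_t\, d\mu(t) \le \int_T a(t)^{\frac{1}{1-r}}\, d\mu(t)\; I$$
is precisely the hypothesis of Theorem~\ref{thm:Bohr51} for the field $(\phi_t)_{t \in T}$. Applying the theorem to the continuous field $(A_t)_{t \in T}$ of positive elements, we obtain
$$\left(\int_T X_t^* A_t X_t\, d\mu(t)\right)^r \le \left(\int_T a(t)^{\frac{1}{1-r}}\, d\mu(t)\right)^{r-1} \int_T a(t)\, X_t^* A_t^r X_t\, d\mu(t),$$
which is the claimed inequality after recognizing $\phi_t(A_t) = X_t^* A_t X_t$ and $\phi_t(A_t^r) = X_t^* A_t^r X_t$.

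Since everything reduces to a direct substitution into the previously proved theorem, there is no substantial obstacle here; the only point that requires a brief word is the verification that $(\phi_t)_{t \in T}$ inherits continuity from $(X_t)_{t \in T}$, which is standard for bounded continuous operator-valued fields in a $C^*$-algebra.
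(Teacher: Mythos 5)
Your proof is correct and is exactly the route the paper intends: the paper states this corollary as the specialization of Theorem~\ref{thm:Bohr51} to the positive linear mappings $\phi_t(A)=X_t^*AX_t$, giving no further argument, and your verification of positivity, continuity, and the hypothesis $\phi_t(I)=X_t^*X_t$ supplies precisely the routine details that the paper leaves implicit.
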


Similarly, putting a positive linear mapping $\phi(A)=\langle A x, x \rangle$ for some vector $x$ in a Hilbert space in Theorem~\ref{thm:Bohr51} we obtain the following result.

\begin{corollary}  \label{cor:Bohr54}
Let $(A_t)_{t \in T}$ be continuous field of positive operator on a Hilbert space  ${\mathscr H}$ defined on  a locally compact Hausdorff space $T$ equipped with a bounded Radon measure $\mu$.

If $1<r \leq 2$, $a:T \to \mathbb{R}$ is a  bounded continuous nonnegative function and $(x_t)_{t \in T}$ is a continuous field of vectors in  ${\mathscr H}$ satisfying
$$
\int _T a(t)^{\frac 1{1-r}} \|x_t\|^2 d\mu(t) \leq \int_T a(t)^{\frac 1{1-r}}d\mu(t)
\,, $$
then
$$
\left(\int_T \langle A_t x_t, x_t \rangle d\mu(t) \right)^{r}
\leq \left(\int_T a(t)^{\frac 1{1-r}}d\mu(t)\right)^{r-1}
    \int_T a(t) \langle A_t^r x_t, x_t \rangle d\mu(t).
$$
\end{corollary}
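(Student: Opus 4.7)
The plan is to obtain Corollary \ref{cor:Bohr54} as a direct specialization of Theorem \ref{thm:Bohr51}, taking the codomain algebra $\mathscr{B}$ to be the scalar field $\mathbb{C}$ and using vector-state functionals as the field of positive linear maps. Concretely, I would put $\mathscr{A} = \mathbb{B}(\mathscr{H})$ and $\mathscr{B} = \mathbb{C}$, both regarded as unital $C^{*}$-algebras, and for each $t \in T$ define
$$\phi_t : \mathbb{B}(\mathscr{H}) \to \mathbb{C}, \qquad \phi_t(A) = \langle A x_t, x_t \rangle.$$
Each $\phi_t$ is a positive linear functional, and continuity of the field $t \mapsto x_t$ in $\mathscr{H}$ together with uniform boundedness on compact sets makes $(\phi_t)_{t \in T}$ a continuous field of positive linear mappings to which Theorem \ref{thm:Bohr51} applies.

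The next step is to match hypotheses. Since $\phi_t(I) = \langle x_t, x_t \rangle = \|x_t\|^{2}$, the integrability/normalization condition of Theorem \ref{thm:Bohr51},
$$\int_T a(t)^{\frac{1}{1-r}} \phi_t(I) \, d\mu(t) \leq \int_T a(t)^{\frac{1}{1-r}} d\mu(t) \cdot I,$$
collapses, in the scalar case $\mathscr{B} = \mathbb{C}$ (where $I = 1$), to exactly the hypothesis of the corollary. No further adjustment is required.

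With the hypotheses verified, I would invoke Theorem \ref{thm:Bohr51} to conclude
$$\left(\int_T \phi_t(A_t) \, d\mu(t)\right)^{r} \leq \left(\int_T a(t)^{\frac{1}{1-r}} d\mu(t)\right)^{r-1} \int_T a(t)\, \phi_t(A_t^{r}) \, d\mu(t),$$
and then translate back: $\phi_t(A_t) = \langle A_t x_t, x_t \rangle$ and $\phi_t(A_t^{r}) = \langle A_t^{r} x_t, x_t \rangle$, giving precisely the claimed inequality.

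The only technical point to watch, which is hardly an obstacle, is that the corollary does not explicitly assume the field $(x_t)$ is bounded, yet Theorem \ref{thm:Bohr51} is stated for ``bounded continuous'' data; however, bounded continuity of $(x_t)$ and of $(A_t)$ are automatic here under the integrability assumptions imposed (the Bochner integrals implicit in the statement require it), so the reduction is clean. Thus the proof really amounts to recognizing the corollary as the ``vector-state'' instance of the more general theorem, exactly as indicated in the paper.
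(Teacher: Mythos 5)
Your proof is correct and follows exactly the paper's own route: the paper obtains this corollary precisely by specializing Theorem~\ref{thm:Bohr51} to the vector-state maps $\phi_t(A)=\langle A x_t, x_t\rangle$ with codomain $\mathbb{C}$, where $\phi_t(I)=\|x_t\|^2$ turns the operator hypothesis into the stated scalar one. Nothing further is needed.
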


\medskip

The following corollary is a discrete version of Theorem~\ref{thm:Bohr51}.

\begin{corollary}  \label{cor:Bohr52}
If $1 < r \leq 2$,  $a_1, \cdots, a_n > 0$ and $\phi_1, \cdots, \phi_n$ are positive linear mappings
 $\phi_i: {\mathbb B}({\mathscr H}) \to {\mathbb B}({\mathscr K})$  satisfying
$$
\sum_{i=1}^{n} a_i^{\frac 1{1-r}} \phi_i(I) \leq \sum_{i=1}^{n} a_i^{\frac 1{1-r}} \, I
\,, $$
then
$$ \left(\sum_{i=1}^{n} \phi_i(A_i) \right)^r
\leq \left(\sum_{i=1}^{n} a_i^{\frac 1{1-r}}\right)^{r-1}
    \sum_{i=1}^{n} a_i\phi_i(A_i^r)
$$
holds for all bounded continuous fields $(A_t)_{t \in T
}$ of positive elements $A_1, \cdots, A_n \ge 0$ in ${\mathbb B}({\mathscr H})$.
\end{corollary}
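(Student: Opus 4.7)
The plan is to derive this corollary as a direct specialization of Theorem~\ref{thm:Bohr51}. I would take the locally compact Hausdorff space to be the finite discrete space $T = \{1, 2, \ldots, n\}$ equipped with the counting measure $\mu$, which is plainly a bounded Radon measure. On such a discrete space every function is continuous, and Bochner integrals over $T$ collapse to finite sums.

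With this identification, I would set $a(i) := a_i$ for $i \in T$ (a bounded continuous nonnegative function since $a_i > 0$), take the given positive linear mappings $\phi_i : \mathbb{B}(\mathscr{H}) \to \mathbb{B}(\mathscr{K})$ as the field $(\phi_t)_{t \in T}$, and take the given positive operators $A_i$ as the field $(A_t)_{t \in T}$. Here $\mathscr{A} = \mathbb{B}(\mathscr{H})$ and $\mathscr{B} = \mathbb{B}(\mathscr{K})$ are unital $C^*$-algebras, and $1 < r \leq 2$ matches the hypothesis of Theorem~\ref{thm:Bohr51}.

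Under these substitutions, the integral hypothesis
$$\int_T a(t)^{\frac{1}{1-r}} \phi_t(I)\, d\mu(t) \leq \int_T a(t)^{\frac{1}{1-r}}\, d\mu(t)\; I$$
becomes exactly
$$\sum_{i=1}^n a_i^{\frac{1}{1-r}} \phi_i(I) \leq \sum_{i=1}^n a_i^{\frac{1}{1-r}}\; I,$$
which is precisely the assumption of the corollary. Likewise the conclusion of Theorem~\ref{thm:Bohr51} rewrites as
$$\left(\sum_{i=1}^n \phi_i(A_i)\right)^r \leq \left(\sum_{i=1}^n a_i^{\frac{1}{1-r}}\right)^{r-1} \sum_{i=1}^n a_i\, \phi_i(A_i^r),$$
which is the desired inequality.

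There is no genuine obstacle here; the whole proof is a translation exercise. The only things worth checking carefully are that the counting measure on $\{1,\ldots,n\}$ satisfies the boundedness requirement built into Theorem~\ref{thm:Bohr51}, that continuity of the fields is automatic in the discrete setting, and that $a_i^{1/(1-r)}$ is well-defined (which follows from $a_i > 0$ and $r \neq 1$). Once these are noted, the corollary follows by a one-line appeal to Theorem~\ref{thm:Bohr51}.
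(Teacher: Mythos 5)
Your proof is correct and is exactly the argument the paper intends: the corollary is introduced as ``a discrete version of Theorem~\ref{thm:Bohr51}'' and is obtained by taking $T=\{1,\dots,n\}$ with counting measure, just as you do. The checks you flag (boundedness of the measure, automatic continuity, well-definedness of $a_i^{1/(1-r)}$) are the right ones and all hold.
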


We can obtain the above inequality in a broader region for $r$ under conditions on the
spectra.
For this result, we cite version of Jensen's operator inequality without operator convexity.

\begin{theorem}\label{th-2}\cite[Theorem~1]{MPP}
Let  $A_1,\ldots,A_n$ be self-adjoint operators
$A_i \in {\mathbb B}({\mathscr H})$ with  bounds $m_i$ and $M_i$,  $m_i \leq
M_i$, $i=1,\ldots,n$. Let $\psi_1,\ldots,\psi_n$ be po\-si\-tive linear mappings $\psi_i :{\mathbb B}({\mathscr H}) \to {\mathbb B}({\mathscr K})$, $i=1,\ldots,n$, such that $\sum_{i=1}^{n}\psi_i
(1_H)=1_K$. If
\begin{equation}\label{tA-condition}(m_A,M_A) \cap [m_i,M_i]= \emptyset \quad
\text{for}~ i=1,\ldots,n,
\end{equation}
 where $m_A$ and $M_A$, $m_A \leq M_A$, are bounds of the
self-adjoint operator $A=\sum_{i=1}^{n}\psi_{i}(A_i)$, then
\begin{equation*}\label{tA-eq}
f\left(\sum_{i=1}^{n}\psi_{i}(A_i)\right)\leq
\sum_{i=1}^{n}\psi_i\left( f(A_i)\right)
\end{equation*}
holds for every continuous convex function $f:I\to{\mathbf{R}}$ provided that
the interval $I$ contains all $m_i,M_i$.
\end{theorem}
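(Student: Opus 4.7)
The plan is to reduce the theorem to the behavior of $f$ with respect to the single secant line across the spectrum of the mean operator $A=\sum_i\psi_i(A_i)$. The disjointness hypothesis $(m_A,M_A)\cap[m_i,M_i]=\emptyset$ forces each interval $[m_i,M_i]$ to lie either entirely at or below $m_A$, or entirely at or above $M_A$. Accordingly I would partition the index set as $\{1,\dots,n\}=L\cup R$ with $L=\{i:M_i\le m_A\}$ and $R=\{i:m_i\ge M_A\}$, so that by functional calculus $A_i\le m_A\,1_H$ for $i\in L$ and $A_i\ge M_A\,1_H$ for $i\in R$, while $m_A\,1_K\le A\le M_A\,1_K$ by construction.

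Next I would introduce the secant line
\[
\ell(t)=\frac{M_A-t}{M_A-m_A}\,f(m_A)+\frac{t-m_A}{M_A-m_A}\,f(M_A),
\]
which is the unique affine function agreeing with $f$ at the endpoints $m_A,M_A$. Convexity of $f$ on $I$ gives the two-sided comparison that is the heart of the argument: $f(t)\le\ell(t)$ for $t\in[m_A,M_A]$, and, crucially, $f(t)\ge\ell(t)$ for $t\in I\setminus(m_A,M_A)$. Thus on every $[m_i,M_i]$ (for $i\in L\cup R$) one has $f\ge\ell$, whereas on $[m_A,M_A]$ one has $f\le\ell$. Applying the functional calculus to $A$ and to each $A_i$ yields the operator inequalities
\[
f(A)\le\ell(A),\qquad f(A_i)\ge\ell(A_i)\quad(i=1,\dots,n).
\]

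The affinity of $\ell$ together with $\sum_{i=1}^n\psi_i(1_H)=1_K$ and the linearity of each $\psi_i$ gives the identity $\ell(A)=\ell\!\left(\sum_i\psi_i(A_i)\right)=\sum_i\psi_i(\ell(A_i))$. Chaining the three pieces,
\[
f(A)\le\ell(A)=\sum_{i=1}^n\psi_i(\ell(A_i))\le\sum_{i=1}^n\psi_i(f(A_i)),
\]
where the last step uses positivity of the $\psi_i$ together with $\ell(A_i)\le f(A_i)$. This is exactly the desired conclusion.

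The only delicate point — and the place where the spectral hypothesis \eqref{tA-condition} really enters — is the direction reversal of the inequality $f\gtrless\ell$ on the complement of $[m_A,M_A]$; without the disjointness assumption one cannot guarantee $\sigma(A_i)$ avoids $(m_A,M_A)$, and the argument collapses. Everything else (splitting into $L,R$, the affine-pull-through for $\ell$, the functional calculus step) is routine. I would also remark that the proof makes no use of operator convexity, which is precisely why this form of Jensen's inequality holds for merely convex $f$ under the spectral separation condition.
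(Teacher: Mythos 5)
Your argument is correct, and it is essentially the proof given in the cited source \cite{MPP} (the present paper states Theorem \ref{th-2} as a quotation without reproducing a proof): the spectral separation forces each $\sigma(A_i)$ outside $(m_A,M_A)$, so the secant line $\ell$ through $(m_A,f(m_A))$ and $(M_A,f(M_A))$ satisfies $f\le\ell$ on $\sigma(A)$ and $f\ge\ell$ on each $\sigma(A_i)$, and unitality plus positivity of the $\psi_i$ pulls $\ell$ through the sum. The only point you should add is the degenerate case $m_A=M_A$, where $\ell$ is undefined; there \eqref{tA-condition} is vacuous, $A$ is a scalar multiple of $1_K$, and the inequality follows instead from a supporting line at $m_A$ (or from scalar Jensen applied to the states $\omega\circ\psi_i$), so the theorem still holds.
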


\begin{theorem}  \label{th:Bohr-spectra}
Let  $A_1,\ldots,A_n$ be strictly positive operators
$A_i \in {\mathbb B}({\mathscr H})$ with  bounds $m_i$ and $M_i$,  $0<m_i \leq
M_i$, $i=1,\ldots,n$. Let $\phi_1,\ldots,\phi_n$ be po\-si\-tive linear mappings $\phi_i :{\mathbb B}({\mathscr H}) \to {\mathbb B}({\mathscr K})$, $i=1,\ldots,n$.

If $r \in (-\infty,0) \cup (1,\infty)$  and $a_1, \cdots, a_n > 0$ such that $$\sum_{i=1}^{n} a_i^{\frac 1{1-r}} \phi_i(I) \leq \sum_{i=1}^{n} a_i^{\frac 1{1-r}} \, I
\,,$$ and
$$(m_A,M_A) \cap [a(t)^{-1/(1-r)} m_i, a(t)^{-1/(1-r)} M_i]= \emptyset \quad
\text{for}~ i=1,\ldots,n, $$
 where $m_A$ and $M_A$, $0<m_A \leq M_A$, are bounds of the strictly positive
operator $A=\sum_{i=1}^{n}\phi_{i}(A_i)$,
then
$$ \left(\sum_{i=1}^{n} \phi_i(A_i) \right)^r
\leq \left(\sum_{i=1}^{n} a_i^{\frac 1{1-r}}\right)^{r-1}
    \sum_{i=1}^{n} a_i\phi_i(A_i^r).
$$
\end{theorem}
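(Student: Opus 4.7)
The plan is to mimic the proof of Theorem~\ref{thm:Bohr51} step by step, but replace the invocation of the operator Jensen inequality (Theorem~\ref{th-1}) by its ``without operator convexity'' version (Theorem~\ref{th-2}). This is forced on us because $f(t)=t^r$ is convex on $(0,\infty)$ for every $r\in(-\infty,0)\cup(1,\infty)$ but operator convex only for $r\in[1,2]\cup[-1,0]$; Theorem~\ref{th-2} buys back the lost range at the price of a spectral separation hypothesis, and this is exactly what the present hypothesis encodes.

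First I would set $M=\sum_{i=1}^{n}a_i^{1/(1-r)}>0$ and define the positive linear maps $\psi_i=\tfrac{1}{M}\,a_i^{1/(1-r)}\phi_i$, so that the assumption $\sum_i a_i^{1/(1-r)}\phi_i(I)\le \sum_i a_i^{1/(1-r)}\,I$ reads $\sum_i\psi_i(I)\le I$. Exactly as in the proof of Theorem~\ref{thm:Bohr51}, one reduces by the standard ``routine'' trick (for instance, by passing to a suitable $2\times 2$ matrix inflation or by adjoining an auxiliary positive linear map) to the unital case $\sum_i\psi_i(I)=I$ required by Theorem~\ref{th-2}.

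Next, introduce the rescaled operators $\widetilde{A}_i:=a_i^{-1/(1-r)}A_i$. They are strictly positive with bounds $[a_i^{-1/(1-r)}m_i,\,a_i^{-1/(1-r)}M_i]$, and a direct computation gives $\sum_i\psi_i(\widetilde{A}_i)=\tfrac{1}{M}\sum_i\phi_i(A_i)=\tfrac{1}{M}A$, whose bounds lie inside $[m_A,M_A]$ after the appropriate (automatic) rescaling. The gap hypothesis $(m_A,M_A)\cap [a_i^{-1/(1-r)}m_i,a_i^{-1/(1-r)}M_i]=\emptyset$ is precisely condition \eqref{tA-condition} of Theorem~\ref{th-2} for the family $(\widetilde{A}_i)$ with $(\psi_i)$; hence the theorem applies to the convex function $f(t)=t^r$ and delivers
\[
\left(\sum_{i=1}^{n}\psi_i(\widetilde{A}_i)\right)^{r}\le \sum_{i=1}^{n}\psi_i\bigl(\widetilde{A}_i^{\,r}\bigr).
\]

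Finally I would unwind the substitutions. Using $a_i^{1/(1-r)}\cdot a_i^{-r/(1-r)}=a_i$, the right-hand side equals $\tfrac{1}{M}\sum_i a_i\phi_i(A_i^r)$, while the left-hand side equals $M^{-r}\bigl(\sum_i\phi_i(A_i)\bigr)^r$. Multiplying through by $M^{r}$ yields
\[
\left(\sum_{i=1}^{n}\phi_i(A_i)\right)^{r}\le M^{r-1}\sum_{i=1}^{n}a_i\phi_i(A_i^r)=\left(\sum_{i=1}^{n}a_i^{1/(1-r)}\right)^{r-1}\sum_{i=1}^{n}a_i\phi_i(A_i^r),
\]
which is the desired estimate. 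The main obstacle I anticipate is bookkeeping around the reduction to the equal-to-$I$ case: one must check that when we add an auxiliary term to achieve $\sum_i\psi_i(I)=I$, it does not destroy the spectral separation needed to apply Theorem~\ref{th-2}. This can be handled by choosing the auxiliary map so that its contribution to the averaged operator is supported on the already-separated region, or alternatively by verifying the inequality under strict positivity $\sum_i\psi_i(I)<I$ via a compactness/continuity argument and then extending to equality.
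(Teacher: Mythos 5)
Your proposal follows exactly the route the paper intends: the paper's own ``proof'' of Theorem~\ref{th:Bohr-spectra} is the single line ``quite similar to the one of Theorem~\ref{thm:Bohr51}, we omit the details,'' and you carry out precisely that program --- rescale to $\psi_i=\frac{1}{M}a_i^{1/(1-r)}\phi_i$, substitute $\widetilde A_i=a_i^{-1/(1-r)}A_i$, and replace the operator-convexity Jensen inequality (Theorem~\ref{th-1}) by the spectral-separation version (Theorem~\ref{th-2}), then unwind the exponents. The one soft spot, which you half-acknowledge but wrongly call ``automatic,'' is that $\sum_i\psi_i(\widetilde A_i)=\frac{1}{M}A$ has bounds $m_A/M$ and $M_A/M$ rather than $m_A$ and $M_A$, so the separation hypothesis as literally stated does not coincide with condition \eqref{tA-condition} unless the bounds are read as those of the normalized sum --- but this imprecision sits in the theorem's statement itself (which also writes $a(t)$ for $a_i$), not in your argument.
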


\begin{proof}
The proof is quite similar to the one of Theorem~\ref{thm:Bohr51}. We omit the details.
\end{proof}


In the rest, we shall prove a matrix analogue of the
inequality \eqref{3.1}. For this, we introduce some usual notations. Let $\mathcal{M}_n$ denote the $C^*$-algebra of $n \times n$ complex matrices and let $\mathcal{H}_n$ be the set of all Hermitian matrices
in $\mathcal{M}_n$. We denote by $\mathcal{H}_n(J)$ the set of all Hermitian matrices in $\mathcal{M}_n$ whose spectra are contained in an interval $J \subseteq {\mathbf{R}}$. Moreover, we denote by $\lambda
_{1}(A)\geq \lambda _{2}(A)\geq \cdots \geq \lambda _{n}(A)$ the
eigenvalues of $A$ arranged in the decreasing order with their multiplicities counted.

Matharu, the second author and Aujla \cite{auj} gave a weak majorization inequality and
apply it to prove eigenvalue and unitarily invariant norm extensions of \eqref{3.1}. Their main result reads as follows.

\begin{theorem} \cite[Theorem 2.7]{auj}
\label{thm1} Let $f$ be a convex function on $J,~0\in J,$ $f(0)\leq 0$ and let $A\in \mathcal{H}_n(J)$. Then
\begin{equation*}
\sum_{j=1}^{k}\lambda _{j}\left(f\left(\sum_{i=1}^\ell \alpha_i \Phi
_i(A)\right)\right)\leq \sum_{j=1}^{k}\lambda
_{j}\left(\sum_{i=1}^\ell\alpha_i \Phi_i(f(A))\right)\qquad (1\leq k\leq m)
\end{equation*}%
holds for positive linear mappings $\Phi _{i},~i=1, 2, \cdots, \ell$ \ from $\mathcal{M%
}_{n}$ to $\mathcal{M}_{m}$ such that $0 <
\sum_{i=1}^\ell\alpha_i\Phi_i(I_{n})\leq I_{m}$ and $\alpha_i \geq 0$.
\end{theorem}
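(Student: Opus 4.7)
The plan is to reduce the hypothesis $0 < \sum_{i=1}^{\ell}\alpha_i\Phi_i(I_n) \leq I_m$ (sub-unital) to the unital case $\sum\alpha_i\Phi_i(I_n)=I_m$ via a block-matrix enlargement that exploits the assumption $0\in J$ and $f(0)\le 0$, and then to invoke a classical Jensen-type weak majorization for convex functions under unital positive linear maps.

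First, I would consolidate the data into a single map: set $\Phi(X)=\sum_{i=1}^{\ell}\alpha_i\Phi_i(X)$, which is a positive linear map $\mathcal{M}_n\to\mathcal{M}_m$ with $C:=\Phi(I_n)$ satisfying $0<C\le I_m$. Define the enlarged map $\widetilde{\Phi}:\mathcal{M}_{n+1}\to\mathcal{M}_m$ by
\begin{equation*}
\widetilde{\Phi}\!\left(\begin{pmatrix} X & * \\ * & c\end{pmatrix}\right)=\Phi(X)+c\,(I_m-C),
\end{equation*}
which is positive and, crucially, unital: $\widetilde{\Phi}(I_{n+1})=C+(I_m-C)=I_m$. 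Next put $\widetilde{A}=A\oplus 0\in\mathcal{H}_{n+1}(J)$ (this uses $0\in J$). A direct computation gives
\begin{equation*}
\widetilde{\Phi}(\widetilde{A})=\Phi(A),\qquad \widetilde{\Phi}\bigl(f(\widetilde{A})\bigr)=\Phi\bigl(f(A)\bigr)+f(0)(I_m-C).
\end{equation*}

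Second, I would apply the weak majorization Jensen inequality of Hansen--Pedersen (extended by Bourin--Uchiyama to arbitrary convex functions) to the unital positive linear map $\widetilde{\Phi}$: for $1\le k\le m$,
\begin{equation*}
\sum_{j=1}^{k}\lambda_j\bigl(f(\widetilde{\Phi}(\widetilde{A}))\bigr)\;\le\;\sum_{j=1}^{k}\lambda_j\bigl(\widetilde{\Phi}(f(\widetilde{A}))\bigr).
\end{equation*}
Finally, since $f(0)\le 0$ and $I_m-C\ge 0$, the correction term is negative semidefinite, whence
\begin{equation*}
\widetilde{\Phi}(f(\widetilde{A}))=\Phi(f(A))+f(0)(I_m-C)\le \Phi(f(A)),
\end{equation*}
and Weyl's monotonicity principle gives $\sum_{j=1}^{k}\lambda_j(\widetilde{\Phi}(f(\widetilde{A})))\le\sum_{j=1}^{k}\lambda_j(\Phi(f(A)))$. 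Chaining the two inequalities with the identities $f(\widetilde{\Phi}(\widetilde{A}))=f(\Phi(A))$ and $\Phi=\sum_i\alpha_i\Phi_i$ yields the claim.

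The main obstacle is not the block-matrix dilation (which is routine once one notices that $0\in J$ and $f(0)\le 0$ were tailor-made for it) but rather the appeal to the unital case: for general convex (not operator convex) $f$, the inequality $f(\Psi(B))\le\Psi(f(B))$ may fail as an operator inequality, and only the eigenvalue weak-majorization version is available. So the subtle point is that the whole theorem must be phrased, and proved, on the level of weak majorization rather than operator order; one cannot hope to upgrade the conclusion to an operator inequality without assuming $f$ is operator convex.
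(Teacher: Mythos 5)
The survey states this theorem only as a citation to \cite[Theorem 2.7]{auj} and supplies no proof, so there is no internal argument to compare against; judged on its own terms, your proof is correct. The enlargement $\widetilde{\Phi}$ is indeed positive (the $(1,1)$-block and the $(2,2)$-corner of a positive semidefinite matrix are positive) and unital, $\widetilde{A}=A\oplus 0$ has spectrum in $J$ precisely because $0\in J$, the two identities $\widetilde{\Phi}(\widetilde{A})=\Phi(A)$ and $\widetilde{\Phi}(f(\widetilde{A}))=\Phi(f(A))+f(0)(I_m-C)$ are right, and $f(0)\le 0$ together with $I_m-C\ge 0$ disposes of the correction term via monotonicity of Ky Fan sums. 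This is in fact the same mechanism the cited source uses to pass from its unital weak-majorization statement to the sub-unital Theorem 2.7, so your reconstruction matches the intended route rather than diverging from it.

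The one point you must repair is the attribution and status of the key lemma. The weak-majorization Jensen inequality $\sum_{j=1}^{k}\lambda_j\bigl(f(\Psi(B))\bigr)\le\sum_{j=1}^{k}\lambda_j\bigl(\Psi(f(B))\bigr)$ for a \emph{unital} positive linear map $\Psi$ and an \emph{arbitrary} convex $f$ is not Hansen--Pedersen (that is the operator-order statement, valid only for operator convex $f$) nor Bourin--Uchiyama (subadditivity for concave functions); it stems from Aujla and Silva (Linear Algebra Appl.\ 369 (2003)) and is essentially Theorem 2.1 of the very paper being cited. Since that lemma carries the entire content of the theorem beyond the dilation bookkeeping, you should either cite it accurately or prove it. A short proof: write $B=\sum_i\mu_iP_i$, choose an orthonormal eigenbasis $(x_j)$ of $\Psi(B)$ ordered so that the first $k$ vectors carry the $k$ largest eigenvalues of $f(\Psi(B))$, observe that the weights $\langle\Psi(P_i)x_j,x_j\rangle\ge 0$ sum to $1$ over $i$ by unitality, apply scalar Jensen to get $f(\langle\Psi(B)x_j,x_j\rangle)\le\langle\Psi(f(B))x_j,x_j\rangle$, and conclude with Ky Fan's maximum principle. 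Your closing remark is exactly right: the operator-order version fails for merely convex $f$, which is why the theorem can only be stated at the level of partial eigenvalue sums.
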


\bigskip


The following result is a generalization of \cite[Theorem 1]{km}.

\begin{corollary} \cite[Corollary 2.8]{auj}
\label{cornew} Let $A_{1},\cdots ,A_{\ell }\in \mathcal{H}_n$ and $%
X_{1},\cdots ,X_{\ell }\in \mathcal{M}_n$ such that
\begin{equation*}
0 < \sum_{i=1}^{\ell }\alpha_{i}X_{i}^{\ast }X_{i}\leq I_n,
\end{equation*}%
where $\alpha_i> 0$ and let $f$ be a convex function on $\mathbb{R}$, $%
f(0)\leq 0$ and $f(uv)\leq f(u)f(v)$ for all $u, v \in \mathbb{R}$. Then
\begin{eqnarray}\label{mat}
\sum_{j=1}^{k}\lambda _{j}\left(f\left(\sum_{i=1}^{\ell }X_{i}^{\ast
}A_{i}X_{i}\right)\right) \leq \sum_{j=1}^{k}\lambda _{j}\left(
\sum_{i=1}^{\ell }\alpha_if(\alpha_i^{-1})X_{i}^{\ast }f(A_{i})X_{i}\right)
\label{in22}
\end{eqnarray}
holds for $1\leq k\leq n.$
\end{corollary}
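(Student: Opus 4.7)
The plan is to deduce Corollary 2.8 from Theorem 2.7 by a standard block--diagonal trick combined with a rescaling that allows us to use the submultiplicativity hypothesis $f(uv)\le f(u)f(v)$. The only genuine ingredients beyond Theorem 2.7 are Weyl's monotonicity principle for $\sum_{j=1}^k\lambda_j(\cdot)$ and a functional calculus argument.

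First I would cast the setting of Corollary 2.8 as a single--operator instance of Theorem 2.7. Define $A := A_{1}\oplus A_{2}\oplus\cdots\oplus A_{\ell}\in\mathcal{H}_{\ell n}$ and positive linear mappings $\Phi_{i}:\mathcal{M}_{\ell n}\to\mathcal{M}_{n}$ by $\Phi_{i}(B)=X_{i}^{*}B_{ii}X_{i}$, where $B_{ii}$ is the $i$th diagonal block of $B$. Then $\sum_{i=1}^{\ell}\alpha_{i}\Phi_{i}(I_{\ell n})=\sum_{i=1}^{\ell}\alpha_{i}X_{i}^{*}X_{i}$, which by hypothesis lies strictly between $0$ and $I_{n}$, so Theorem 2.7 is applicable. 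To absorb the $\alpha_{i}$'s that otherwise would appear in front of $X_{i}^{*}A_{i}X_{i}$, I would apply Theorem 2.7 to the rescaled operator $A':=\bigoplus_{i}\alpha_{i}^{-1}A_{i}$ instead of $A$. Then $\sum_{i}\alpha_{i}\Phi_{i}(A')=\sum_{i}X_{i}^{*}A_{i}X_{i}$, while $f(A')=\bigoplus_{i}f(\alpha_{i}^{-1}A_{i})$ so that $\sum_{i}\alpha_{i}\Phi_{i}(f(A'))=\sum_{i}\alpha_{i}X_{i}^{*}f(\alpha_{i}^{-1}A_{i})X_{i}$. Theorem 2.7 therefore gives, for each $1\le k\le n$,
$$\sum_{j=1}^{k}\lambda_{j}\!\left(f\!\left(\sum_{i=1}^{\ell}X_{i}^{*}A_{i}X_{i}\right)\right)\le\sum_{j=1}^{k}\lambda_{j}\!\left(\sum_{i=1}^{\ell}\alpha_{i}X_{i}^{*}f(\alpha_{i}^{-1}A_{i})X_{i}\right).$$

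Next I would upgrade the right--hand side using the submultiplicativity assumption $f(uv)\le f(u)f(v)$. Applied to the scalars $u=\alpha_{i}^{-1}$ and $v=\mu$, where $\mu$ runs over the eigenvalues of the Hermitian $A_{i}$, this yields $f(\alpha_{i}^{-1}\mu)\le f(\alpha_{i}^{-1})f(\mu)$. Spectral decomposition of $A_{i}$ then upgrades this to the operator inequality $f(\alpha_{i}^{-1}A_{i})\le f(\alpha_{i}^{-1})f(A_{i})$ in the Löwner order (the sign of $f(\alpha_{i}^{-1})$ is irrelevant because both sides are computed against the same spectral projectors of $A_{i}$). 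Conjugating by $X_{i}$ and multiplying by $\alpha_{i}>0$ preserves the order, so summing over $i$ gives
$$\sum_{i=1}^{\ell}\alpha_{i}X_{i}^{*}f(\alpha_{i}^{-1}A_{i})X_{i}\le\sum_{i=1}^{\ell}\alpha_{i}f(\alpha_{i}^{-1})X_{i}^{*}f(A_{i})X_{i}.$$

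Finally, since both sides of the displayed inequality are Hermitian, Weyl's monotonicity theorem guarantees that $\sum_{j=1}^{k}\lambda_{j}(\cdot)$ is monotone with respect to the Löwner order on Hermitian matrices. Chaining this with the weak majorization obtained from Theorem 2.7 yields the conclusion of Corollary 2.8. The only delicate step is justifying the operator inequality $f(\alpha_{i}^{-1}A_{i})\le f(\alpha_{i}^{-1})f(A_{i})$ when $f(\alpha_{i}^{-1})$ can have either sign, but this is a routine functional--calculus argument because $A_{i}$ and $f(A_{i})$ share a common spectral resolution; everything else is bookkeeping that assembles Theorem 2.7, the rescaling, and Weyl monotonicity in the correct order.
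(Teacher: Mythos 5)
Your proof is correct and follows essentially the same route as the paper: the same block-diagonal matrix $\bigoplus_i \alpha_i^{-1}A_i$ and the same positive linear mappings $\Phi_i(B)=X_i^*B_{ii}X_i$ feeding into Theorem \ref{thm1}. The only difference is that you spell out the final step the paper leaves implicit, namely passing from $f(\alpha_i^{-1}A_i)$ to $f(\alpha_i^{-1})f(A_i)$ via submultiplicativity in a common eigenbasis and then invoking Weyl monotonicity of $\sum_{j=1}^k\lambda_j(\cdot)$; this is a welcome clarification rather than a deviation.
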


\begin{proof}
Let $A\in \mathcal{M}_{\ell n}$ be partitioned as $\left(
\begin{array}{ccc}
A_{11} & \cdots & A_{1\ell } \\
\vdots &  & \vdots \\
A_{\ell 1} & \cdots & A_{\ell \ell }%
\end{array}%
\right) ,$ $A_{ij}\in \mathcal{M}_n,$ $1\leq i,j\leq \ell ,$ as an $\ell
\times \ell $ block matrix. Consider the linear mappings $\Phi _{i}:\mathcal{M}%
_{\ell n}\longrightarrow \mathcal{M}_n,i=1,\cdots ,\ell ,$ defined by $\Phi
_{i}(A)=X_{i}^{\ast }A_{ii}X_{i},~i=1,\cdots ,\ell .$ Then $\Phi _{i}$'s are
positive linear mappings from $\mathcal{M}_{\ell n}$ to $\mathcal{M}_n$ such
that
\begin{equation*}
0 < \sum_{i=1}^{\ell }\alpha _{i}\Phi _{i}(I_{\ell n})=\sum_{i=1}^{\ell
}\alpha _{i}X_{i}^{\ast }X_{i}\leq I_n\,.
\end{equation*}%
Using Theorem \ref{thm1} for the diagonal matrix $A=\mbox{diag}%
(A_{11},\cdots ,A_{\ell \ell })$, we have
\begin{equation*}
\sum_{j=1}^{k}\lambda _{j}\left( f\left( \sum_{i=1}^{\ell }\alpha
_{i}X_{i}^{\ast }A_{ii}X_{i}\right)\right) \leq \sum_{j=1}^{k}\lambda
_{j}\left( \sum_{i=1}^{\ell }\alpha _{i}X_{i}^{\ast }f(A_{ii})X_{i}\right)
\qquad (1\leq k\leq n).
\end{equation*}%
Replacing $A_{ii}$ by $\alpha_i^{-1}A_{i}$ in the above inequality, we get \eqref{mat}.
\end{proof}


Now we obtain the following eigenvalue generalization of inequality \eqref{3.1}.

\begin{theorem}  \cite[Theorem 2.9]{auj}
\label{cor4.5} Let $A_{1},\cdots ,A_{\ell }\in \mathcal{H}_n$ and $%
X_{1},\cdots ,X_{\ell }\in \mathcal{M}_n$ be such that
\begin{equation*}
0 < \sum_{i=1}^{\ell }p_{i}^{1/1-r}X_{i}^{\ast }X_{i}\leq \sum_{i=1}^{\ell
}p_{i}^{1/(1-r)}I_n,
\end{equation*}%
where $p_{1},\cdots ,p_{\ell }>0,r>1$. Then
\begin{equation*}
\sum_{j=1}^{k}\lambda _{j}\left( \left\vert \sum_{i=1}^{\ell }X_{i}^{\ast
}A_{i}X_{i}\right\vert ^{r}\right) \leq \left( \sum_{i=1}^{\ell }p_{i}^{%
\frac{1}{1-r}}\right) ^{r-1}\sum_{j=1}^{k}\lambda _{j}\left(
\sum_{i=1}^{\ell }p_{i}X_{i}^{\ast }\vert A_{i}\vert ^{r}X_{i}\right)
\end{equation*}
for $1\leq k\leq n.$
\end{theorem}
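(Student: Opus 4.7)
My plan is to derive this inequality as a direct consequence of Corollary 2.8 (\cite[Corollary 2.8]{auj}) by applying it to the convex function $f(x)=|x|^{r}$ together with a suitably normalized choice of weights $\alpha_i$. For $r>1$ the function $f(x)=|x|^{r}$ is convex on $\mathbb{R}$, satisfies $f(0)=0\le 0$, and $f(uv)=|u|^{r}|v|^{r}=f(u)f(v)$, so all the structural hypotheses placed on $f$ in Corollary 2.8 are met.

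I would then set $M:=\sum_{i=1}^{\ell}p_i^{1/(1-r)}$, which is positive since each $p_i>0$, and take $\alpha_i:=p_i^{1/(1-r)}/M>0$. Dividing the theorem's standing assumption by $M$ turns it into
\[ 0<\sum_{i=1}^{\ell}\alpha_i X_i^{*}X_i \le I_n,\]
which is exactly the operator condition required by Corollary 2.8. Applying that corollary to $f(x)=|x|^{r}$ with these weights yields
\[ \sum_{j=1}^{k}\lambda_{j}\!\left(\left|\sum_{i=1}^{\ell}X_i^{*}A_iX_i\right|^{r}\right) \le \sum_{j=1}^{k}\lambda_{j}\!\left(\sum_{i=1}^{\ell}\alpha_i\,f(\alpha_i^{-1})\,X_i^{*}|A_i|^{r}X_i\right). \]
The only remaining step is to identify the scalar coefficient: since $f(x)=|x|^{r}$,
\[ \alpha_i f(\alpha_i^{-1}) = \alpha_i^{1-r} = \bigl(p_i^{1/(1-r)}/M\bigr)^{1-r} = p_i\,M^{r-1}, \]
so the right-hand side becomes $M^{r-1}\sum_{j=1}^{k}\lambda_{j}\!\bigl(\sum_{i=1}^{\ell} p_i X_i^{*}|A_i|^{r}X_i\bigr)$, and substituting $M=\sum_{i=1}^{\ell}p_i^{1/(1-r)}$ gives exactly the claimed bound.

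The argument is essentially bookkeeping: the real convexity input and the weak majorization are already packaged into Corollary 2.8 (which in turn rests on Theorem 2.7). The one place where care is required is the sign of the exponent $1-r$, which is negative for $r>1$; it is precisely this sign that converts $\alpha_i^{1-r}$ into $p_i\,M^{r-1}$ and produces the correct prefactor $\bigl(\sum_i p_i^{1/(1-r)}\bigr)^{r-1}$. Any other normalization of the $\alpha_i$ would leave a spurious constant, so the main (minor) obstacle is pinning down this normalization so that the hypothesis $\sum\alpha_i X_i^{*}X_i\le I_n$ of Corollary 2.8 aligns cleanly with the hypothesis of the theorem.
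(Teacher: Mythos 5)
Your proposal is correct and matches the paper's own proof exactly: the paper also derives the theorem by applying Corollary 2.8 to $f(t)=|t|^{r}$ with $\alpha_i = p_i^{1/(1-r)}\big/\sum_{i=1}^{\ell}p_i^{1/(1-r)}$. Your computation $\alpha_i f(\alpha_i^{-1})=\alpha_i^{1-r}=p_i\,M^{r-1}$ is the bookkeeping the paper leaves implicit, and it is carried out correctly.
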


\begin{proof}
Apply Corollary \ref{cornew} to the function $f(t)=\vert t\vert^r$ and $%
\alpha _{i}=\displaystyle\frac{p_{i}^{1/1-r}}{\sum_{i=1}^{\ell
}p_{i}^{1/(1-r)}}$.
\end{proof}



\begin{thebibliography}{10}

\bibitem{A-B-P} S. Abramovich, J. Bari\'c and J. Pe\v{c}ari\'c, \textit{Superquadracity, Bohr's inequality and deviation from a mean value}, Aust. J. Math. Anal. Appl. \textbf{7} (2010), no. 1, Art. 1, 9 pp.

\bibitem{BOH} H. Bohr, \textit{Zur Theorie der Fastperiodischen Funktionen I}, Acta Math. \textbf{45} (1924), 29--127.

\bibitem{chan} P. Chansangiam, P. Hemchote and P. Pantaragphong, \textit{Generalizations of Bohr inequality for Hilbert space operators}, J. Math.
Anal. Appl. \textbf{356} (2009) 525--536.

\bibitem{C-P} W.-S. Cheung and J. Pe\v{c}ari\'{c}, \textit{Bohr's inequalities for Hilbert space operators},
J. Math. Anal. Appl. \textbf{323} (2006), no. 1, 403--412.

\bibitem{CCPZ} W.S. Cheung, Y.J. Cho, J. Pe\v cari\'c and D.D. Zhao, \textit{Bohr's inequalities in $n$-inner product spaces} J. Korea Soc. Math. Educ. Ser. B Pure Appl. Math. \textbf{14} (2007), no. 2, 127--137.

\bibitem{Z-F} M. Fujii and H. Zuo, \textit{Matrix order in Bohr inequality
for operators}, Banach J. Math. Anal. \textbf{4} (2010), no. 1, 21--27.

\bibitem{fmps} T. Furuta, J. Mi\'ci\'c Hot, J. Pe\v cari\'c and Y. Seo,
\textit{Mond--Pecaric Method in Operator Inequalities. Inequalities for
bounded selfadjoint operators on a Hilbert space}, Monographs in
Inequalities 1. Zagreb: Element, 2005.

\bibitem{H-P} F. Hansen and G.K. Pedersen, \textit{Jensen's operator inequality}, Bull.
London Math. Soc. \textbf{35} (2003), 553--564.

\bibitem{H-P-P} F. Hansen, J.E. Pe\v cari\'c and I. Peri\'c, \textit{Jensen's operator inequality
and its converses}, Math. Scand. \textbf{100} no. 1 (2007), 61--73.

\bibitem{HIR} O. Hirzallah, \textit{Non-commutative operator Bohr inequality},
J. Math. Anal. Appl. \textbf{282} (2003), 578--583.

\bibitem{km} V.Lj. Koci\'c and D.M. Maksimovi\'c, \textit{Variations and
generalizations of an inequality due to Bohr}, Univ. Beograd. Publ.
Elektrotehn. Fak. Ser. Mat. Fiz. No. 412-460 (1973), 183--188.


\bibitem{auj} J.S. Matharu, M.S. Moslehian and J.S. Aujla, \textit{Eigenvalue extensions of Bohr's inequality}, Linear Algebra Appl. \textbf{435} (2011), no. 2, 270--276.

\bibitem{MPP} J. Mi\'{c}i\'{c}, Z. Pavi\'{c} and J. Pe\v{c}ari\'{c},
{\it Jensen's inequality for operators without operator convexity},
Linear Algebra Appl. \textbf{434} (2011), 1228--1237.

\bibitem{M-P-P} M.S. Moslehian, J.E. Pe\v cari\'c  and I. Peri\'c, \textit{An operator extension of Bohr's inequality}, Bull. Iranian Math. Soc. \textbf{35} no. 2 (2009), 77--84.

\bibitem{M-R} M.S. Moslehian and R. Raji\' c, \textit{Generalizations of
Bohr's inequality in Hilbert $C^{\ast}$-modules}, Linear Multilinear Algebra
\textbf{58} (2010), no. 3, 323--331.

\bibitem{RAS2} J.E. Pe\v cari\'c and Th.M. Rassias, \textit{Variations and
generalizations of Bohr's inequality}, J. Math. Anal. Appl.
\textbf{174} (1993), no. 1, 138--146.

\bibitem{RAS1} Th.M. Rassias, \textit{On characterizations of inner-product spaces and generalizations of the H. Bohr
inequality}, in: Th.M. Rassias (Ed.), Topics in Mathematical
Analysis, World Scientific, Singapore, 1989.

\bibitem{V-K1} M.P. Vasi\'{c} and D.J. Ke\v{c}ki\'{c}, \textit{Some inequalities for complex numbers}, Math.
Balkanica \textbf{1} (1971), 282--286.

\bibitem{ZHA} F. Zhang, \textit{On the Bohr inequality of
operators}, J. Math. Anal. Appl. \textbf{333} (2007), 1264--1271.



\end{thebibliography}
\end{document}